\theoremstyle{plain}
\newtheorem{mythe}{Theorem} 
\newtheorem{lem}[mythe]{Lemma}
\newtheorem{cor}[mythe]{Corollary}
\newtheorem{pro}[mythe]{Proposition}
\theoremstyle{definition}
\newcommand{\ee}{\varepsilon}
\newcommand{\cB}{\mathcal{B}}
\newcommand{\bT}{\mathbb{T}}
\newcommand{\cK}{\mathcal{K}}
\newcommand{\la}{\langle}
\newcommand{\ra}{\rangle}
\newcommand{\cH}{\mathcal{H}}
\newcommand{\cA}{\mathcal{A}}
\newcommand{\bofh}{\cB(\cH)}
\newcommand{\bC}{\mathbb{C}}
\newcommand{\spn}{\text{span }}
\title[Connes' Embedding Problem and Quantum XOR Games]{Connes' Embedding Problem and Winning Strategies for Quantum XOR Games}
\author{Samuel J. Harris}
\address{University of Waterloo
Department of Pure Mathematics 
Waterloo, Ontario 
Canada  N2L 3G1}
\email{sj2harri@uwaterloo.ca}
\begin{document}
\begin{abstract}
We consider quantum XOR games, defined in \cite{RV}, from the perspective of unitary correlations defined in \cite{HP}.  We show that Connes' embedding problem has a positive answer if and only if every quantum XOR game has entanglement bias equal to the commuting bias.  In particular, the embedding problem is equivalent to determining whether every quantum XOR game $G$ with a winning strategy in the commuting model also has a winning strategy in the approximate finite-dimensional model.
\end{abstract}

\maketitle

\section{Introduction}

A long-standing open problem in the theory of von Neumann algebras, known as Connes' embedding problem \cite{connes}, asks whether every weakly separable $II_1$ factor can be approximately embedded into the hyperfinite $II_1$ factor in a way that approximately preserves the trace.  Recently, several connections between the embedding problem and questions in quantum information theory have been exhibited.  The most notable connection is the equivalence of Connes' embedding problem to the weak Tsirelson problem regarding probabilistic correlations in a separated system \cite{junge,fritz,ozawa}.  This problem asks whether every probabilistic quantum bipartite correlation in finite inputs and finite outputs in the commuting model can be approximated by correlations in the finite-dimensional model \cite{tsirelson93}.  

These quantum bipartite correlations naturally correspond to strategies that can be used in two-player, finite input-output non-local games.  Such games have been instrumental in exhibiting separations between the probabilistic correlation sets in the various models.  For example, the well-known CHSH game \cite{CHSH} is a famous example of a game for which the maximum winning probability of the game for Alice and Bob is higher if they have access to entanglement than if they play the game using classical methods.  Recent work by W. Slofstra \cite{slofstra17} shows that there is a non-local game with a winning strategy in the approximate finite-dimensional model, but no winning strategy in the finite-dimensional model.  Therefore, there are input and output sets for which the set of correlations arising from the finite-dimensional model is not closed.  Considering these advances, a natural question is whether the weak Tsirelson problem can be described in terms of a certain class of (possibly extended) non-local games and winning strategies in the commuting model and the approximate finite-dimensional model.  In other words, is Connes' embedding problem equivalent to the assertion that a certain class of extended non-local games with winning commuting strategies must also have winning approximate finite-dimensional strategies?

In this paper, we show that the answer to this question is affirmative, and that the class of quantum XOR games (defined in \cite{RV}) is rich enough to detect the solution to the embedding problem.  In particular, determining whether every quantum XOR game with winning commuting strategy has a winning approximate finite-dimensional strategy is equivalent to Connes' embedding problem.  If one considers the analogous weak Tsirelson problem related to unitary correlation sets \cite{HP}, the coherent embezzlement games from \cite{RV} can be used to show that the set of unitary correlations in the finite-dimensional model are not closed \cite{CLP,HP}, as long as the unitaries have size at least $2$.  In light of these facts, it is plausible that studying quantum XOR games may be a reasonable plan of attack for solving the embedding problem.

The paper is organized as follows.  In Section 2, we give a brief overview of quantum XOR games from \cite{RV} and the notion of bias for these games.  We also show the correspondence between bias and linear functionals on $M_n \otimes M_n$ that are contractive with respect to the unitary correlation norms from \cite{HP}.  In Section 3, we use Lemma \ref{selfadjoint} to reduce the Tsirelson problem for unitary correlations to self-adjoint unitary correlations.  This allows us to prove Corollary \ref{winning}, which states the equivalence of the embedding problem to the problem of winning strategies for quantum XOR games in the commuting and approximate finite-dimensional models.

\section{Preliminaries}

We briefly recall the definition of a quantum XOR game with two parties, Alice and Bob.  More information can be found in \cite{RV}.  Loosely speaking, a quantum XOR game is a generalization of a classical XOR game.  In the classical case, the referee has a list of $n$ possible questions $\{1,...,n\}$, and the set of possible answers for Alice and Bob is $\{0,1\}$.  For each pair $s,t \in \{1,...,n\}$, there is some associated number $R_{s,t} \in [-1,1]$ (known to Alice and Bob) satisfying $\sum_{s,t} |R_{s,t}|=1$.  The referee gives question $s$ to Alice and question $t$ to Bob with probability $|R_{s,t}|$.  If $R_{s,t} \geq 0$, then Alice and Bob must respond with the same bit; if $R_{s,t}<0$, then they must respond with different bits.  

In a quantum XOR game, the questions are now given as states (i.e., unit vectors) on a certain Hilbert space. In particular, the referee sends some state on $\bC^n \otimes \bC^n$ to Alice and Bob, where Alice has access to the left copy of $\bC^n$ and Bob has access to the right copy of $\bC^n$.  Every quantum XOR game of size $n$ is associated with a self-adjoint matrix $M \in M_n \otimes M_n$ with $\|M\|_1 \leq 1$, where $\| \cdot \|_1$ denotes the trace norm.  Conversely, every self-adjoint matrix $M \in M_n \otimes M_n$ with $\|M\|_1 \leq 1$ is associated to a quantum XOR game $G$ of size $n$ \cite{RV}.  For the sake of simplicity, we will always consider the case where $\|M\|_1=1$.

For our purposes, a quantum XOR game $G$ of size $n$ can be described as follows (see \cite{RV} for a more general definition): let $\{ \varphi_i\}_{i=1}^{n^2} \subseteq \bC^n \otimes \bC^n$ be an orthonormal basis.  Let $p_1,...,p_{n^2} \in [0,1]$ be such that $\sum_{i=1}^{n^2} p_i=1$, and let $c_i \in \{0,1\}$ for each $1 \leq i \leq n^2$.  With probability $p_i$, the referee prepares the state $\varphi_i \in \bC^n \otimes \bC^n$.  Alice and Bob may use their own Hilbert space $\cH$ and \textbf{observables} $A \in \cB(\bC^n \otimes \cH)$ and $B \in \cB(\cH \otimes \bC^n)$ (i.e., self-adjoint unitaries) such that $A \otimes I_n$ and $I_n \otimes B$ commute in $\cB(\bC^n \otimes \cH \otimes \bC^n)$.  They may also prepare their space in the state $\psi \in \cH$.  Based on the application of $A$ and $B$ to the state $\psi$, Alice and Bob return outcomes $a \in \{0,1\}$ and $b \in \{0,1\}$ respectively.  If $c_i=0$, then Alice and Bob's output bits must be equal; if $c_i=1$, their output bits must be distinct.  (If Alice and Bob are working in the tensor product model, then there must be a decomposition $\cH=\cH_A \otimes \cH_B$ where $A \in \cB(\bC^n \otimes \cH_A)$ and $B \in \cB(\cH_B \otimes \bC^n)$, and where $\psi \in \cH_A \otimes \cH_B$ is a state.  Moreover, the operator $(A \otimes I_n)(I_n \otimes B)$ is replaced with $A \otimes B$.)  The matrix $M \in M_n \otimes M_n$ associated with this quantum XOR game is $M=\sum_{i=1}^{n^2} (-1)^{c_i} p_i \varphi_i \varphi_i^*$, where $\varphi_i \varphi_i^*$ denotes the rank one orthogonal projection of $\bC^n \otimes \bC^n$ onto $\spn \{\varphi_i\}$ \cite{RV}.

Before we further consider possible strategies for Alice and Bob for quantum XOR games, we recall the definitions of some of the unitary correlation sets given in \cite{HP}.  The set of \textbf{quantum correlations} $B_q(n,n) \subseteq M_n \otimes M_n$ is given by the set of all $X=(X_{(i,j),(k,\ell)}) \in M_n \otimes M_n$ of the form $$X_{(i,j),(k,\ell)}=\la (U_{ij} \otimes V_{k\ell})\psi,\psi \ra,$$
where $U=(U_{ij}) \in M_n(\cB(\cH_A))$ and $V=(V_{k\ell}) \in M_n(\cB(\cH_B))$ are unitary, $\cH_A$ and $\cH_B$ are finite-dimensional Hilbert spaces, and $\psi \in \cH_A \otimes \cH_B$ is a unit vector.  The set of \textbf{quantum spatial correlations} $B_{qs}(n,n) \subseteq M_n \otimes M_n$ is defined similarly, only that we no longer assume that $\cH_A$ and $\cH_B$ are finite-dimensional.  The set of \textbf{quantum approximate correlations} is given by $B_{qa}(n,n)=\overline{B_q(n,n)}=\overline{B_{qs}(n,n)}$.  The set of \textbf{quantum commuting correlations} is given by the set of all $X \in M_n \otimes M_n$ of the form $$X=(\la U_{ij}V_{k\ell}\psi,\psi \ra)_{(i,j),(k,\ell)},$$
where $U=(U_{ij}) \in M_n(\bofh)$ and $V=(V_{k\ell}) \in M_n(\bofh)$ are unitary, $\psi \in \cH$ is a unit vector, and $U_{ij}V_{k\ell}=V_{k\ell}U_{ij}$ for all $i,j,k,\ell$.  Since $U$ and $V$ are unitary, it also follows that the set $\{U_{ij},U_{ij}^*\}_{i,j=1}^n$ commutes with the set $\{V_{k\ell},V_{k\ell}^*\}_{k,\ell=1}^n$.

We have the containments $$B_q(n,n) \subseteq B_{qs}(n,n) \subseteq B_{qa}(n,n) \subseteq B_{qc}(n,n), \, \forall n \geq 2,$$
and the latter two sets are compact in $M_n \otimes M_n$ \cite{HP}.  In fact, for $t \in \{qa,qc\}$, the set $B_t(n,n)$ is the unit ball of a reasonable cross-norm on $M_n \otimes M_n$ \cite[Theorem 4.8]{HP}; we will denote this norm by $\| \cdot \|_t$, and we will denote the Banach space $(M_n \otimes M_n,\| \cdot \|_t)$ by $M_n \otimes_t M_n$.  Finally, we will let $\| \cdot\|_t^*$ denote the dual norm of $\| \cdot \|_t$ on $(M_n \otimes M_n)^*$.

For a quantum XOR game $G$ and $t \in \{q,qa,qc\}$, we define a \textbf{$t$-strategy} for Alice and Bob to be a correlation $X \in B_t(n,n)$.

Instead of working with maximum success probabilities in different models, it is convenient to work with a related quantity, known as the bias.  To ease notation, whenever $\cH$ and $\cK$ are Hilbert spaces and $\cH$ is finite-dimensional, we will denote by $\text{Tr}_{\cH}$ the operator $\text{Tr} \otimes \text{id}_{\cK}$ acting on $\cB(\cH \otimes \cK)$, where $\text{Tr}$ denotes the unnormalized trace on $\cB(\cH)$.  With this in hand, the \textbf{entanglement bias} (or the \textbf{quantum bias}) of a quantum XOR game $G$ with associated matrix $M$ is given by $$\omega_q^*(G)=\sup \{ \la \text{Tr}_{\bC^n \otimes \bC^n}[(A \otimes B)(M \otimes I_{\cH_A \otimes \cH_B})] \psi,\psi \ra \},$$
where the supremum is taken over all finite-dimensional Hilbert spaces $\cH_A$ and $\cH_B$, unit vectors $\psi \in \cH_A \otimes \cH_B$, and observables $A \in \cB(\bC^n \otimes \cH_A)$ and $B \in \cB(\cH_B \otimes \bC^n)$ (that is, self-adjoint unitaries) \cite{RV}. In the supremum above, we are identifying $M \otimes I_{\cH_A \otimes \cH_B}$ with the matrix $M' \in M_{n^2}(\cB(\cH_A \otimes \cH_B))$ given by $M'=(M_{(i,j),(k,\ell)} I_{\cH_A \otimes \cH_B})_{(i,j),(k,\ell)}$.

Similarly, we define the \textbf{commuting bias} of a quantum XOR game $G$ with associated matrix $M$ to be $$\omega_{qc}^*(G)=\sup \{ \la \text{Tr}_{\bC^n \otimes \bC^n}[((A \otimes I_{\bC^n})(I_{\bC^n} \otimes B)(M \otimes I_{\cH})]\psi,\psi \ra,$$
where the supremum is taken over all Hilbert spaces $\cH$, unit vectors $\psi \in \cH$, and self-adjoint unitaries $A=(A_{ij}) \in \cB(\bC^n \otimes \cH)$ and $B=(B_{k\ell}) \in \cB(\cH \otimes \bC^n)$ such that $(A \otimes I_n)(I_n \otimes B)=(I_n \otimes B)(A \otimes I_n)$ as operators on $\bC^n \otimes \cH \otimes \bC^n$.  Here, we are identifying $M \otimes I_{\cH}$ with the operator $M' \in M_{n^2}(\bofh)$ given by $M'=(M_{(i,j),(k,\ell)} I_{\cH})$.  Adapting the proof of \cite[Proposition 3.1]{CLP}, since the matrix $(A \otimes I_n)(I_n \otimes B)$ is given by $(A_{ij}B_{k\ell})_{(i,j),(k,\ell)}$, it follows that $A \otimes I_n$ commutes with $I_n \otimes B$ if and only if $A_{ij}B_{k\ell}=B_{k\ell}A_{ij}$ for all $i,j,k,\ell$.  In particular, since $A=A^*$ and $B=B^*$, we have $A_{ij}=A_{ji}^*$ and $B_{k\ell}=B_{\ell k}^*$.  Thus, the self-adjoint unitaries $A$ and $B$ satisfy $(A \otimes I_n)(I_n \otimes B)=(I_n \otimes B)(A \otimes I_n)$ if and only if the set $\{I\} \cup \{A_{ij}\}_{i,j=1}^n$ $*$-commutes with the set $\{I\} \cup \{B_{k\ell}\}_{k,\ell=1}^n$.  

One may view both of these notions of bias as twice the difference between the maximum success probability in the corresponding model and the success probability from the random strategy (i.e. Alice and Bob respond randomly, regardless of the input).  (The argument from \cite{RV} for entanglement bias can be adapted in the obvious way for the commuting bias.)  Thus, if $p$ is the maximum success probability of winning a quantum XOR game $G$ using $t$-strategies (for $t \in \{q,qc\}$), then $\omega_t^*(G)=2p-1$.

We will also consider the above definitions of bias that arise from omitting the assumption that $A$ and $B$ are self-adjoint, while keeping the other assumptions intact.  In particular, we may consider the notions of bias given with respect to the unitary correlation sets defined above.  In this context, we also consider the bias of a particular strategy, which has the same definition but is denoted by $\omega_t^*(G,U,V,\psi)$ for a specific strategy $(U,V,\psi)$ whose correlation matrix $X$ is in $B_t(n,n)$.  If $X$ is the correlation associated with $(U,V,\psi)$, then we also let $\omega_t^*(G,X)=\omega_t^*(G,U,V,\psi)$.  We note that $\omega_t^*(G,U,V,\psi)$ is $\bC$-valued.  A \textbf{perfect $t$-strategy} is a $t$-strategy $X$ for which $\omega_t^*(G,X)=1$.  It will follow from Theorem \ref{biasunitaries} that there is a perfect $t$-strategy for the quantum XOR game $G$ if and only if there is a $t$-strategy arising from observables for which the probability that Alice and Bob win is $1$.

We remark that there is a natural correspondence between bias for quantum XOR games and self-adjoint linear functionals on $M_n \otimes M_n$.  The easiest way to see this is using unitary correlation sets.  Indeed, suppose that $X=(\la U_{ij}V_{k\ell}\psi,\psi \ra)_{(i,j),(k,\ell)} \in B_{qc}(n,n)$.  Then since $M$ is self-adjoint,
\begin{align*}
\omega_{qc}^*(G,X)&=\la \text{Tr}_{\bC^n \otimes \bC^n}[(U_{ij}V_{k\ell})_{(i,j),(k,\ell)} (M_{(i,j),(k,\ell)}I_{\cH})_{(i,j),(k,\ell)}] \psi,\psi \ra \\
&=\left\la \text{Tr}_{\bC^n \otimes \bC^n} \left( \sum_{p,q=1}^n U_{ip}V_{kq} M_{(p,j),(q,\ell)} \right) \psi,\psi \right\ra \\
&=\sum_{i,j,p,q} \la U_{ip}V_{jq}M_{(p,i),(q,j)} \psi,\psi \ra \\
&=\sum_{i,j,p,q} M_{(p,i),(q,j)} \la U_{ip}V_{jq}\psi,\psi \ra \\
&=\sum_{i,j,k,\ell} M_{(j,i),(\ell,k)} X_{(i,j),(k,\ell)}, \\
&=\text{Tr}(MX).
\end{align*}
Thus, the quantum XOR game $G$ defines the linear functional $\text{Tr}(M \cdot):M_n \otimes M_n \to \bC$ which gives $\omega_{qc}^*(G,X)$ for each $X \in B_{qc}(n,n)$.  An analogous argument holds for $\omega_q^*(G,X)$ whenever the corresponding correlation matrix $X$ lies in $B_q(n,n)$.

A helpful fact is that for $t \in \{q,qs,qa,qc\}$, the self-adjoint $t$-correlations arise from $t$-strategies involving observables.

\begin{pro}
\label{selfadjointobservable}
Let $t \in \{q,qs,qa,qc\}$ and $X=X^* \in B_t(n,n)$.  Then there is a Hilbert space $\cH$, self-adjoint unitaries $U \in \cB(\bC^n \otimes \cH)$ and $V \in \cB(\cH \otimes \bC^n)$ and a unit vector $\psi \in \cH$ such that $X_{(i,j),(k,\ell)}=\la U_{ij}V_{k\ell}\psi,\psi \ra$ for all $i,j,k,\ell$.  If $t=qs$, then we may take $\cH=\cH_A \otimes \cH_B$ and $U \in \cB(\bC^n \otimes \cH_A)$ and $V \in \cB(\cH_B \otimes \bC^n)$ such that $X_{(i,j),(k,\ell)}=\la (U_{ij} \otimes V_{k\ell})\psi,\psi \ra$ for all $i,j,k,\ell$.  Moreover, if $t=q$, then we may take $\cH_A$ and $\cH_B$ to be finite-dimensional. Finally, if $t=qa$, then $X=\lim_{m \to \infty} Y^{(m)}$, where $Y^{(m)} \in B_q(n,n)$ is a $q$-strategy involving observables.
\end{pro}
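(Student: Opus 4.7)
The plan is to apply a Halmos-type dilation $W \mapsto \left(\begin{smallmatrix} 0 & W \\ W^* & 0 \end{smallmatrix}\right)$, which turns any unitary into a self-adjoint unitary on the doubled space, adapted to the matrix-valued setting. Two difficulties will drive the construction: (i) the naive doubling only recovers a symmetric average of correlations, so I must choose a vector that extracts $X$ exactly; and (ii) in the commuting model, placing both doublings into a shared $\bC^2$ register would destroy $*$-commutation, so two independent ``flag'' registers are needed.

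First I treat $t = qs$ (the case $t = q$ will follow by preservation of finite-dimensionality). Starting from a strategy $(U, V, \psi)$ on $\cH_A \otimes \cH_B$, set $\tilde \cH_A = \cH_A \otimes \bC^2$ and $\tilde \cH_B = \cH_B \otimes \bC^2$, and let
\[ \tilde U_{ij} = \begin{pmatrix} 0 & U_{ij} \\ U_{ji}^* & 0 \end{pmatrix} \in \cB(\tilde \cH_A), \qquad \tilde V_{k\ell} = \begin{pmatrix} 0 & V_{k\ell} \\ V_{\ell k}^* & 0 \end{pmatrix} \in \cB(\tilde \cH_B). \]
A direct check gives $\tilde U_{ij}^* = \tilde U_{ji}$ and $\sum_k \tilde U_{ik}\tilde U_{kj} = \delta_{ij} I$, so $\tilde U = (\tilde U_{ij})$ is a self-adjoint unitary in $\cB(\bC^n \otimes \tilde \cH_A)$ (and similarly for $\tilde V$). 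For the state I take the flag-entangled vector
\[ \tilde\psi = \tfrac{1}{\sqrt 2}\bigl(\psi \otimes e_1 \otimes e_1 + \psi \otimes e_2 \otimes e_2\bigr) \in \cH_A \otimes \cH_B \otimes \bC^2 \otimes \bC^2 \cong \tilde \cH_A \otimes \tilde \cH_B. \]
Expanding $(\tilde U_{ij} \otimes \tilde V_{k\ell})\tilde\psi$, the cross-flag terms annihilate $\tilde\psi$, leaving
\[ \la (\tilde U_{ij} \otimes \tilde V_{k\ell})\tilde\psi, \tilde\psi \ra = \tfrac{1}{2}\la (U_{ij} \otimes V_{k\ell})\psi, \psi \ra + \tfrac{1}{2}\la (U_{ji}^* \otimes V_{\ell k}^*)\psi, \psi \ra. \]
The second summand equals $\overline{X_{(j,i),(\ell,k)}} = X_{(i,j),(k,\ell)}$ by $X = X^*$, so the sum is exactly $X_{(i,j),(k,\ell)}$.

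For $t = qc$ the same scheme works after the key adjustment that the two doublings use \emph{independent} flag registers: put $\tilde\cH = \cH \otimes \bC^2 \otimes \bC^2$, place the $U$-doubling on the first $\bC^2$ (identity on the second) and the $V$-doubling on the second $\bC^2$ (identity on the first). The flag operators now live in disjoint tensor factors, and combined with the hypothesis that $\{U_{ij}, U_{ij}^*\}$ $*$-commutes with $\{V_{k\ell}, V_{k\ell}^*\}$, this forces each $\tilde U_{ij}$ to commute with each $\tilde V_{k\ell}$. The same flag-entangled state $\psi \otimes \tfrac{1}{\sqrt 2}(e_1 \otimes e_1 + e_2 \otimes e_2)$ and the identical calculation recover $X$.

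For $t = qa$, I reduce to the $q$ case by a symmetrisation-plus-approximation argument. Pick $X^{(m)} \in B_q(n,n)$ with $X^{(m)} \to X$. The ``conjugate'' $q$-strategy $U'_{ij} = U_{ji}^*$, $V'_{k\ell} = V_{\ell k}^*$ (on the same finite-dimensional Hilbert spaces) implements $(X^{(m)})^*$, and a direct-sum construction shows $B_q(n,n)$ is convex. Hence $Y^{(m)} := \tfrac{1}{2}(X^{(m)} + (X^{(m)})^*) \in B_q(n,n)$ is self-adjoint and converges to $X = X^*$; applying the already-established $q$ case to each $Y^{(m)}$ produces the desired observable approximation. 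I expect the hardest step to be arranging the $qc$ commutation, which the double-flag trick is specifically engineered to handle; the rest is mechanical verification combined with the identity $X = X^*$.
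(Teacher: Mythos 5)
Your proof is correct and is essentially the paper's own argument: the same Halmos-type doubling $W \mapsto \left(\begin{smallmatrix} 0 & W \\ W^* & 0\end{smallmatrix}\right)$ applied entrywise, the same maximally flag-entangled vector $\frac{1}{\sqrt 2}(\psi\otimes e_1\otimes e_1 + \psi\otimes e_2\otimes e_2)$ (written in the paper as $\frac{1}{\sqrt2}[\psi\;0\;0\;\psi]^t$), and for the commuting case your two independent flag registers are exactly the paper's $4\times4$ block matrices. The $qa$ reduction via symmetrizing a $B_q$ approximant and invoking the $q$ case also matches what the paper does (it simply cites the $t=q$ case without writing out the symmetrization).
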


\begin{proof}
We first let $t=q$, so that $X=(\la (R_{ij} \otimes S_{k\ell})\psi,\psi \ra)_{(i,j),(k,\ell)}$ for unitaries $R=(R_{ij}) \in \cB(\bC^n \otimes \cH_A)$, $S=(S_{k\ell}) \in \cB(\cH_B \otimes \bC^n)$, finite-dimensional Hilbert spaces $\cH_A$ and $\cH_B$, and a unit vector $\psi \in \cH_A \otimes \cH_B$.  Let $U_{ij}=\begin{bmatrix} 0 & R_{ij} \\ R_{ji}^* & 0 \end{bmatrix}$ and $V_{k\ell}=\begin{bmatrix} 0 & S_{k\ell} \\ S_{\ell k}^* & 0 \end{bmatrix}$.  Performing a canonical shuffle \cite[p.~97]{paulsen02} on the unitaries $\begin{bmatrix} 0 & R \\ R^* & 0 \end{bmatrix} \in M_2(M_n(\cB(\cH_A)))$ and $\begin{bmatrix} 0 & S \\ S^* & 0 \end{bmatrix} \in M_2(M_n(\cB(\cH_B)))$, we see that $U=(U_{ij})$ and $V=(V_{k\ell})$ are self-adjoint unitaries in $M_n(\cB(\cH_A \oplus \cH_A))$ and $M_n(\cB(\cH_B \oplus \cH_B))$, respectively.  Hence, we obtain a $q$-strategy $(U,V,\widetilde{\psi})$, where $\widetilde{\psi}=\frac{1}{\sqrt{2}}\begin{bmatrix} \psi & 0 & 0 & \psi \end{bmatrix}^t$.  Using the fact that $U_{ij}^*=U_{ji}$ and $V_{k\ell}^*=V_{\ell k}$, $$\la (U_{ij} \otimes V_{k\ell})\widetilde{\psi},\widetilde{\psi} \ra=\frac{1}{2} \la (R_{ij} \otimes S_{k\ell})\psi,\psi \ra+\frac{1}{2} \la (R_{ji}^* \otimes S_{\ell k}^*) \psi,\psi \ra=X_{(i,j),(k,\ell)}.$$
The proof for $t=qs$ is similar.
For $t=qc$, we assume that $R \in \cB(\bC^n \otimes \cH)$ and $S \in \cB(\cH \otimes \bC^n)$ are unitaries and $\psi \in \cH$ is a unit vector such that $(R \otimes I_n)(I_n \otimes S)=(I_n \otimes S)(R \otimes I_n)$ and $X_{(i,j),(k,\ell)}=\la R_{ij}S_{k\ell}\psi,\psi \ra$ for all $i,j,k,\ell$.  We let $$U_{ij}=\begin{bmatrix} 0 & R_{ij} & 0 & 0 \\ R_{ji}^* & 0 & 0 & 0 \\ 0 & 0 & 0 & R_{ij} \\ 0 & 0 & R_{ji}^* & 0 \end{bmatrix} \text{ and } V_{k\ell}=\begin{bmatrix} 0 & 0 & S_{k\ell} & 0 \\ 0 & 0 & 0 & S_{k\ell} \\ S_{\ell k}^* & 0 & 0 & 0 \\ 0 & S_{\ell k}^*  & 0 & 0 \end{bmatrix}.$$
A calculation shows that
$$U_{ij}V_{k\ell}=\begin{bmatrix} 0 & 0 & 0 & R_{ij}S_{k\ell} \\ 0 & 0 & R_{ji}^*S_{k\ell} & 0 \\ 0 & R_{ij}S_{\ell k}^* & 0 & 0 \\ R_{ji}^*S_{\ell k}^* & 0 & 0 & 0 \end{bmatrix}=V_{k\ell}U_{ij},$$
so that $U=(U_{ij}) \in \cB(\bC^n \otimes \cH)$ and $V=(V_{k\ell}) \in \cB(\cH \otimes \bC^n)$ are self-adjoint unitaries with $(U \otimes I_n)(I_n \otimes V)=(I_n \otimes V)(U \otimes I_n)$.  Letting $\widetilde{\psi}=\frac{1}{\sqrt{2}} \begin{bmatrix} \psi & 0 & 0 & \psi \end{bmatrix}^t$, it readily follows that $$\la U_{ij}V_{k\ell}\psi,\psi \ra=\frac{1}{2} \la R_{ij}S_{k\ell}\psi,\psi \ra+\frac{1}{2}\la R_{ji}^*S_{\ell k}^*\psi,\psi \ra=X_{(i,j),(k,\ell)}.$$
Thus, the proposition holds for $t=qc$.  The last statement about $qa$ correlations immediately follows from the $t=q$ case.
\end{proof}

The next proposition, combined with convexity of each $B_t(n,n)$ \cite{HP}, guarantees that $\text{Re}(X) \in B_t(n,n)$ whenever $X \in B_t(n,n)$.

\begin{pro}
\label{starnorm}
Let $n \geq 2$ and $X=(X_{(i,j),(k,\ell)})_{(i,j),(k,\ell)} \in B_t(n,n)$ for $t \in \{q,qs,qa,qc\}$.  Then $X^* \in B_t(n,n)$.
\end{pro}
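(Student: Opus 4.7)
The plan is to exploit the identity $(X^*)_{(i,j),(k,\ell)} = \overline{X_{(k,\ell),(i,j)}}$ and manufacture, from any $t$-strategy realizing $X$, a new $t$-strategy realizing $X^*$. The tool I will use throughout is entrywise complex conjugation with respect to a fixed orthonormal basis of the underlying Hilbert space: write $J$ for the associated antiunitary conjugation and set $\bar A := JAJ$ on operators and $\bar\xi := J\xi$ on vectors. Then $A\mapsto\bar A$ is a $*$-automorphism of $\cB(\cH)$, so unitarity and $*$-commutation relations are preserved, and one has $\overline{\la A\xi,\eta\ra} = \la \bar A\bar\xi,\bar\eta\ra$.

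Consider first the $qc$ case. Given $X_{(i,j),(k,\ell)} = \la U_{ij}V_{k\ell}\psi,\psi\ra$ with unitaries $U,V\in M_n(\cB(\cH))$ whose entries $*$-commute, applying the identity and then conjugation yields
\[
(X^*)_{(i,j),(k,\ell)} = \overline{\la U_{k\ell}V_{ij}\psi,\psi\ra} = \la \overline{U_{k\ell}}\,\overline{V_{ij}}\,\bar\psi,\bar\psi\ra.
\]
I then set $U'_{ij} := \overline{V_{ij}}$ and $V'_{k\ell} := \overline{U_{k\ell}}$; by the $*$-automorphism property, $U' = \bar V$ and $V' = \bar U$ are unitaries in $M_n(\cB(\cH))$ whose entries $*$-commute. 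In particular $\overline{U_{k\ell}}$ and $\overline{V_{ij}}$ commute, which rewrites the right-hand side as $\la U'_{ij}V'_{k\ell}\bar\psi,\bar\psi\ra$, exhibiting $X^*$ as a $qc$-correlation.

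For $t\in\{q,qs\}$ the same idea applies, but the roles of Alice and Bob must be swapped to put the $(i,j)$-indexed operator on the left tensorand. Starting from $X_{(i,j),(k,\ell)} = \la (R_{ij}\otimes S_{k\ell})\psi,\psi\ra$, conjugation produces $\la (\overline{R_{k\ell}}\otimes \overline{S_{ij}})\bar\psi,\bar\psi\ra$. Composing with the flip $F\colon\cH_A\otimes\cH_B\to\cH_B\otimes\cH_A$ and declaring $\cH'_A := \cH_B$, $\cH'_B := \cH_A$, $R'_{ij} := \overline{S_{ij}}$, $S'_{k\ell} := \overline{R_{k\ell}}$, $\psi' := F\bar\psi$, I obtain $(X^*)_{(i,j),(k,\ell)} = \la (R'_{ij}\otimes S'_{k\ell})\psi',\psi'\ra$ with $R' = \bar S$ and $S' = \bar R$ unitary. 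Finite-dimensionality of the Hilbert spaces is preserved, handling both $t=q$ and $t=qs$ at once. The $qa$ case then comes for free, since $X\mapsto X^*$ is a continuous involution on $M_n\otimes M_n$ and $B_{qa}(n,n) = \overline{B_q(n,n)}$.

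The only real obstacle is index bookkeeping: the matrix adjoint interchanges $(i,j)$ with $(k,\ell)$, whereas a quantum or quantum spatial correlation natively labels Alice's operator by $(i,j)$ and Bob's by $(k,\ell)$. This mismatch is what forces the use of the flip $F$ in the spatial setting; in the commuting setting the $*$-commutation of the $U$ and $V$ entries absorbs the mismatch on its own.
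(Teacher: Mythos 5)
Your opening identity is where the argument breaks. In this paper $X$ lives in $M_n\otimes M_n$ under the standard identification $(A\otimes B)_{(i,j),(k,\ell)}=A_{ij}B_{k\ell}$, so $(i,j)$ are the row and column indices of the \emph{first} tensor factor and $(k,\ell)$ those of the second; consequently $(X^*)_{(i,j),(k,\ell)}=\overline{X_{(j,i),(\ell,k)}}$, not $\overline{X_{(k,\ell),(i,j)}}$. (This convention is visible throughout: in the computation $\omega_{qc}^*(G,X)=\sum_{i,j,k,\ell} M_{(j,i),(\ell,k)}X_{(i,j),(k,\ell)}=\Tr(MX)$, and in the way $Z^*$ enters the matrix $W_Z$ in the proof of Lemma \ref{selfadjoint}.) The identity you use is the adjoint of the $n^2\times n^2$ array whose row label is Alice's pair $(i,j)$ and whose column label is Bob's pair $(k,\ell)$, which is a different operation. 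Concretely, if $X=\sum_s A_s\otimes B_s$, the matrix your construction realizes is $\sum_s \overline{B_s}\otimes\overline{A_s}$ (entrywise conjugation composed with the tensor flip), whereas $X^*=\sum_s A_s^*\otimes B_s^*$; already for $X=A\otimes I$ with $A$ a scalar matrix that is neither symmetric nor self-adjoint these two matrices differ. So your argument, though internally consistent (the antiunitary $J$ and the flip $F$ are handled correctly, and you do prove that $B_t(n,n)$ is closed under the conjugate-flip map), does not prove the stated proposition.

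With the correct identity the proof needs neither $J$ nor $F$: one has $(X^*)_{(i,j),(k,\ell)}=\overline{X_{(j,i),(\ell,k)}}=\overline{\la U_{ji}V_{\ell k}\psi,\psi\ra}=\la U_{ji}^*V_{\ell k}^*\psi,\psi\ra=\la (U^*)_{ij}(V^*)_{k\ell}\psi,\psi\ra$, so the strategy $(U^*,V^*,\psi)$ realizes $X^*$, where $U^*$ and $V^*$ are the adjoints of the block unitaries; these are again unitary, their entries $*$-commute in the commuting case (and $U^*\otimes V^*$ works verbatim in the spatial and finite-dimensional cases), and the $qa$ case follows by density. If you replace your first line by this identity, the rest of your machinery is unnecessary and the argument reduces to exactly the paper's proof.
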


\begin{proof}
Suppose that $U=(U_{ij}) \in M_n(\cB(\cH))$ and $V=(V_{k\ell}) \in M_n(\bofh)$ are unitary and $\psi \in \cH$ is a unit vector such that $U_{ij}V_{k\ell}=V_{k\ell}U_{ij}$ for all $i,j,k,\ell$ and $$\la U_{ij}V_{k\ell}\psi,\psi \ra=X_{(i,j),(k,\ell)}.$$
Then $$X^*=(\overline{X}_{(j,i),(\ell,k)})=(\overline{\la U_{ji}V_{\ell k}\psi,\psi \ra})=(\la \psi,U_{ji}V_{\ell k} \psi \ra)=(\la U_{ji}^*V_{\ell k}^* \psi,\psi \ra),$$
using the fact that $U_{ij}^*V_{k\ell}^*=V_{k\ell}^*U_{ij}^*$ for all $i,j,k,\ell$.  It follows that $$X^*=(\la (U^*)_{ij} (V^*)_{k\ell} \psi,\psi \ra)_{(i,j),(k,\ell)} \in B_{qc}(n,n).$$
A similar argument gives the desired result when $t \in \{q,qs\}$.  The same result follows for $t=qa$ by using the density of $B_q(n,n)$ in $B_{qa}(n,n)$.
\end{proof}

\begin{cor}
For $n \geq 2$ and $t \in \{loc,q,qs,qa,qc\}$, let $\| \cdot \|_t^*$ denote the dual norm on $M_n \otimes M_n$ with respect to the normed space $M_n \otimes_t M_n$.  Then $\| \cdot \|_t$ is a $*$-norm; i.e., if $\|M\|_t^* \leq 1$, then $\|M^*\|_t^* \leq 1$.
\end{cor}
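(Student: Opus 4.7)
The plan is to unpack the definition of the dual norm via the trace pairing and then invoke Proposition \ref{starnorm}. Identify $(M_n \otimes M_n)^*$ with $M_n \otimes M_n$ through the bilinear form $(M,X) \mapsto \text{Tr}(MX)$; under this identification, for each $t \in \{loc,q,qs,qa,qc\}$ the dual norm takes the form
$$\|M\|_t^* = \sup\{|\text{Tr}(MX)| : X \in B_t(n,n)\}.$$
Thus, to show $\|M^*\|_t^* \leq \|M\|_t^*$ (in fact, I expect to prove equality) it suffices to find, for every $X \in B_t(n,n)$, an $X' \in B_t(n,n)$ such that $|\text{Tr}(M^*X)| = |\text{Tr}(MX')|$.

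The key computation is a one-liner. Using $\overline{\text{Tr}(A)} = \text{Tr}(A^*)$ together with cyclicity of the trace,
$$\overline{\text{Tr}(M^*X)} = \text{Tr}((M^*X)^*) = \text{Tr}(X^*M) = \text{Tr}(MX^*),$$
so $|\text{Tr}(M^*X)| = |\text{Tr}(MX^*)|$. Now Proposition \ref{starnorm} tells us that for $t \in \{q,qs,qa,qc\}$ the unit ball $B_t(n,n)$ is closed under the $*$-operation; consequently, as $X$ ranges over $B_t(n,n)$, so does $X^*$. Taking the supremum over $X \in B_t(n,n)$ on both sides of the displayed identity yields $\|M^*\|_t^* = \|M\|_t^*$, which in particular gives the desired one-sided inequality.

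This handles four of the five cases. The only remaining case is $t = loc$, which is not covered by Proposition \ref{starnorm}. For this case I would separately verify that $B_{loc}(n,n)$ is closed under $X \mapsto X^*$; the argument is an easier version of the one in Proposition \ref{starnorm}, since local unitary correlations are (closures of) convex combinations of product strategies, and the operation $(U,V,\psi) \mapsto (U^*,V^*,\psi)$ (applied factorwise to a product strategy) is enough, essentially because $X^*$ is built coordinatewise from complex conjugation and transposition of the correlation entries.

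In terms of difficulty, there is no real obstacle here: once the dual norm is expressed via the trace pairing and Proposition \ref{starnorm} is available, the argument is a short manipulation of $\text{Tr}$. The only genuine subtlety is bookkeeping the $loc$ case separately; everything else reduces to the observation that both the trace-pairing and the unit ball $B_t(n,n)$ are compatible with the $*$-operation.
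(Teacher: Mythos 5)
Your proof is correct and follows essentially the same route as the paper: express the dual norm via the trace pairing, use $\text{Tr}(XM^*)=\overline{\text{Tr}(X^*M)}$, and invoke Proposition \ref{starnorm} to conclude that the supremum over $B_t(n,n)$ is unchanged. Your explicit handling of the $t=loc$ case is a small improvement in care over the paper's proof, which leaves that case implicit.
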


\begin{proof}
Let $M \in M_n \otimes M_n$ be such that $\|M\|_t \leq 1$.  The linear functional on $M_n \otimes M_n$ with respect to $M$ is given by $$f((X_{(i,j),(k,\ell)}))=\text{Tr}(XM).$$
If $g$ is the linear functional on $M_n \otimes M_n$ with respect to $M^*$, then $$g((X_{(i,j),(k,\ell)}))=\text{Tr}(XM^*)=\overline{\text{Tr}(MX^*)}=\overline{\text{Tr}(X^*M)}=\overline{f(X^*)}.$$
Since $\| \cdot \|_t$ is a $*$-norm, it follows that $\|M^*\|_t \leq 1$, as required.
\end{proof}

Using Propositions \ref{selfadjointobservable} and \ref{starnorm}, we obtain an equivalent description of bias, which allows us to use the theory of unitary correlations.

\begin{mythe}
\label{biasunitaries}
Let $G$ be a quantum XOR game of size $n$ with associated matrix $M \in M_n \otimes M_n$.  Then $$\omega_{qc}^*(G)=\sup \{ |\text{Tr}(MX)|: X \in B_{qc}(n,n)\}.$$
Similarly, we have $$\omega_q^*(G)=\sup \{ |\text{Tr}(MX)|: X \in B_q(n,n)\}.$$
\end{mythe}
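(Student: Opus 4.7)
The plan is to combine the identity $\omega_{qc}^*(G,X) = \text{Tr}(MX)$ derived in the calculation immediately preceding Proposition \ref{selfadjointobservable} with Propositions \ref{selfadjointobservable} and \ref{starnorm}, in order to reduce the supremum over observable-based strategies (which defines $\omega_{qc}^*(G)$) to a supremum over all of $B_{qc}(n,n)$. I will describe the $qc$ case; the argument for $q$ is identical with tensor products replacing commuting products.

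First I would record a simple auxiliary fact: if $X \in B_{qc}(n,n)$ and $|\alpha|=1$, then $\alpha X \in B_{qc}(n,n)$, since replacing $U$ by $\alpha U$ preserves unitarity, the block structure, and commutation with $V$. Combining this with Proposition \ref{starnorm} and the convexity of $B_{qc}(n,n)$ gives the following ``rotate and symmetrize'' construction: for each $X \in B_{qc}(n,n)$, pick $\alpha \in \mathbb{T}$ so that $\alpha\,\text{Tr}(MX) = |\text{Tr}(MX)|$, set $Y = \alpha X$, and let $Z = \frac{1}{2}(Y + Y^*)$. Then $Z \in B_{qc}(n,n)$ is self-adjoint, and since $M = M^*$, we get $\text{Tr}(MZ) = \text{Re}\,\text{Tr}(MY) = |\text{Tr}(MX)|$.

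The upper bound $\omega_{qc}^*(G) \leq \sup\{|\text{Tr}(MX)| : X \in B_{qc}(n,n)\}$ is then immediate, since every observable strategy yields a correlation in $B_{qc}(n,n)$ pairing with $M$ to produce $\text{Tr}(MX)$. For the reverse inequality, I would apply Proposition \ref{selfadjointobservable} to the self-adjoint $Z$ constructed above to obtain genuine observables $(U,V,\psi)$ implementing $Z$, so that $\omega_{qc}^*(G) \geq \omega_{qc}^*(G,Z) = \text{Tr}(MZ) = |\text{Tr}(MX)|$, and then take the supremum over $X$.

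The $q$ case proceeds verbatim: Propositions \ref{selfadjointobservable} and \ref{starnorm} both cover $t = q$, and the scalar rotation $\alpha U$ preserves the tensor product structure. The only real subtlety is that $\text{Tr}(MX)$ may be genuinely complex when $X$ is not self-adjoint, and the rotate-and-symmetrize step is exactly what handles its modulus; no deeper obstacle arises.
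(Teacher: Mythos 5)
Your proposal is correct and follows essentially the same route as the paper: rotate by a unimodular scalar so that $\text{Tr}(MX)$ becomes $|\text{Tr}(MX)|$, symmetrize to $\frac{1}{2}(Y+Y^*)$ using Proposition \ref{starnorm} and convexity, and then invoke Proposition \ref{selfadjointobservable} to realize the resulting self-adjoint correlation by observables. The only (harmless) difference is that you perform this for each $X$ and take the supremum at the end, whereas the paper first uses compactness of $B_{qc}(n,n)$ to pick a maximizer in the commuting case.
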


\begin{proof}
Since every observable is a self-adjoint unitary, it is clear that $\omega_{qc}^*(G)$ is at least the quantity given in the theorem statement; thus, we need only establish the reverse inequality. Using the fact that $B_{qc}(n,n)$ is compact \cite{HP}, we may choose $X \in B_{qc}(n,n)$ such that $$\sup \{ |\text{Tr}(MZ)|: Z \in B_{qc}(n,n)\}=\text{Tr}(MX).$$
Suppose that $X=\la U_{ij}V_{k\ell}\psi,\psi \ra$ where $U=(U_{ij}) \in \cB(\bC^n \otimes \cH)$ and $V=(V_{k\ell}) \in \cB(\cH \otimes \bC^n)$ are unitaries, $\psi \in \cH$ is a unit vector and $(U \otimes I_n)(I_n \otimes V)=(I_n \otimes V)(U \otimes I_n)$.  By multiplying the unitary $U$ by some $\lambda \in \bT$ if necessary, we may assume that $$\text{Tr}(MX)=\omega_{qc}^*(G,X)=\sup \{|\text{Tr}(MZ)|:Z \in B_{qc}(n,n)\}.$$  
Since $B_{qc}(n,n)$ is convex \cite{HP}, it follows that $Y:=\frac{1}{2}(X+X^*) \in B_{qc}(n,n)$.  By Proposition \ref{selfadjointobservable}, $Y$ is represented by self-adjoint observables.  Finally, we see that 
\begin{align*}
\omega_{qc}^*(G,Y)&=\text{Tr} \left( \frac{1}{2}(X+X^*)M \right) \\
&=\frac{1}{2} (\text{Tr}(XM)+\text{Tr}(X^*M)) \\
&=\frac{1}{2}(\text{Tr}(XM)+\overline{\text{Tr}(XM)}) \\
&=\omega_{qc}^*(G),
\end{align*}
using the fact that $M$ is self-adjoint.  This establishes the reverse inequality for the commuting case.  For the entanglement bias, the same argument shows that if $X \in B_q(n,n)$ with $\omega_q(G,X)=\alpha \in [0,1]$, then $Y:=\frac{1}{2}(X+X^*) \in B_q(n,n)$ satisfies $\omega_q(G,Y)=\alpha$.  Using Proposition \ref{selfadjointobservable} and taking the supremum over all such strategies, the result holds for the entanglement bias.
\end{proof} 

\section{Main Results}

In this section, we connect the embedding problem with commuting and entanglement bias for quantum XOR games.  The first step is showing that, when considering Connes' embedding problem, it is enough to consider self-adjoint elements of $B_{qc}(m,m)$ and $B_{qa}(m,m)$ for all $m \geq 2$.  Lemma \ref{selfadjoint} allows for this reduction.

\begin{lem}
\label{selfadjoint}
Let $X \in M_n \otimes M_n$ and $t \in \{qa,qc\}$.  Then $X \in B_t(n,n)$ if and only if $$W:=\begin{pmatrix} 0 & 0 & 0 & X \\ 0 & 0 & 0 & 0 \\ 0 & 0 & 0 & 0 \\ X^* & 0 & 0 & 0 \end{pmatrix} \in B_t(2n,2n).$$
\end{lem}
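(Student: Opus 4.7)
My plan is to handle the two directions of the equivalence separately: an explicit dilation-style construction for the forward direction, and the Halmos unitary dilation for the backward direction.

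For the forward direction in the $qc$ case, suppose $X_{(i,j),(k,\ell)}=\la R_{ij}S_{k\ell}\psi,\psi\ra$ with $R,S\in M_n(\cB(\cH))$ unitaries whose entries $*$-commute. On the enlarged Hilbert space $\cH\otimes\bC^2$, I would define the self-adjoint unitaries
$$U = \begin{pmatrix} 0 & R\otimes A \\ R^*\otimes A^* & 0 \end{pmatrix},\qquad V = \begin{pmatrix} 0 & S\otimes B \\ S^*\otimes B^* & 0 \end{pmatrix}$$
in $M_{2n}(\cB(\cH\otimes\bC^2))$, where $A,B$ are commuting unitaries on $\bC^2$ and $\xi\in\bC^2$ is a unit vector. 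The anti-diagonal block shape automatically forces $U,V$ to have $*$-commuting entries, and evaluating the correlation of $(U,V,\psi\otimes\xi)$ at the four possibly-nonzero block positions yields factors $X_{(\cdot)}\la AB\xi,\xi\ra$, $X^*_{(\cdot)}\la A^*B^*\xi,\xi\ra$, and two auxiliary expressions scaled by $\la AB^*\xi,\xi\ra$ and $\la A^*B\xi,\xi\ra$. Choosing $A=\text{diag}(1,i)$, $B=A^*=\text{diag}(1,-i)$, and $\xi=\frac{1}{\sqrt{2}}(1,1)^t$ makes the first two inner products equal $1$ and the last two equal $0$, producing exactly $W$. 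The $q$ case is analogous, with $A$ acting on $\bC^2_A$, $B$ on $\bC^2_B$, and the state taken as a maximally entangled vector in $\bC^2_A\otimes\bC^2_B$; the $qa$ case follows by density.

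For the backward direction, given a representation $(U',V',\Psi)$ of $W$ on $\cK$, I would extract the $(1,2)$-subblocks $\tilde R_{ij}:=U'_{(1,i),(2,j)}$ and $\tilde S_{k\ell}:=V'_{(1,k),(2,\ell)}$. These are contractions in $M_n(\cB(\cK))$ (as blocks of unitaries) with $*$-commuting entries, and reading the $(1,4)$-block of $W$ gives $X_{(i,j),(k,\ell)}=\la\tilde R_{ij}\tilde S_{k\ell}\Psi,\Psi\ra$. I would then apply Halmos' unitary dilation separately to $\tilde R$ and $\tilde S$, using two distinct auxiliary $\bC^2$ factors to prevent interference between the matrix-unit parts:
$$R = \begin{pmatrix} \tilde R & D_{\tilde R^*} \\ D_{\tilde R} & -\tilde R^* \end{pmatrix}\otimes I_{\bC^2_S},\qquad S = I_{\bC^2_R}\otimes \begin{pmatrix} \tilde S & D_{\tilde S^*} \\ D_{\tilde S} & -\tilde S^* \end{pmatrix},$$
giving unitaries on $\cK\otimes\bC^2_R\otimes\bC^2_S$. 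The entries of $R$ and $S$ $*$-commute: the defect operators $D_{\tilde R},D_{\tilde R^*}$ (functional calculi of $\tilde R^*\tilde R,\tilde R\tilde R^*$) commute with $\tilde S,\tilde S^*$ by the $*$-commutation hypothesis, while the $M_2$ components live on disjoint tensor factors. Taking $\Psi'=\Psi\otimes e_1\otimes e_1$, only the $(e_{11},e_{11})$-corner of $R_{ij}S_{k\ell}$ contributes to the inner product with $\Psi'$, yielding $\la R_{ij}S_{k\ell}\Psi',\Psi'\ra=X_{(i,j),(k,\ell)}$ and showing $X\in B_{qc}(n,n)$. The $q$ case is analogous with finite-dimensional tensor-product auxiliary factors; for $qa$, apply the $q$-version of the backward argument to $q$-approximants $W_m$ of $W$, and continuity of the $(1,2)(1,2)$-subblock extraction gives $X=\lim X_m\in\overline{B_q(n,n)}=B_{qa}(n,n)$.

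The subtlest step throughout is the verification of entrywise $*$-commutation after dilation; this is what forces the disjoint auxiliary $\bC^2$ factors in the backward direction and relies on the fact that functional calculi in $M_n(\cB(\cK))$ preserve commutation with fixed elements of $\cB(\cK)$. Once this is in place, the target correlation is isolated by the scalar identities for $\xi$ (forward) or by the corner selection $e_1\otimes e_1$ (backward).
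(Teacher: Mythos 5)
Your argument is correct, and both directions follow the paper's overall strategy (an explicit off-diagonal embedding for the forward implication; extraction of the upper-right $n\times n$ corner followed by a Halmos dilation for the converse), but the mechanics differ at the two delicate points, and your variants are worth comparing with the paper's. In the forward direction the paper's construction $\widetilde{U}=\left(\begin{smallmatrix} 0 & U \\ U^{*t} & 0\end{smallmatrix}\right)$ produces extra nonzero blocks $Z$ and $Z^*$ in positions $(2,3)$ and $(3,2)$, which it then cancels by averaging $W_Z$ with $W_{-Z}$ (obtained from $iU$, $-iV$) and invoking convexity of $B_{qc}(2n,2n)$; your phase unitaries $A,B$ on an auxiliary $\bC^2$ kill those blocks in a single construction, at the cost of verifying the four scalar identities $\la AB\xi,\xi\ra=\la A^*B^*\xi,\xi\ra=1$ and $\la AB^*\xi,\xi\ra=\la A^*B\xi,\xi\ra=0$ (which do hold for your choices), and without needing convexity. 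In the backward direction the genuine obstruction in the $qc$ case is that simultaneously Halmos-dilating both corner contractions into the \emph{same} $M_2$ corner destroys entrywise commutation (matrix units in a shared $M_2$ factor do not commute); the paper circumvents this by an asymmetric two-step dilation (first $C_{ij}=S_{ij}\oplus S_{ij}$ against the dilation of $T$, then dilating $C$), whereas you place the two dilations on disjoint tensor factors $\bC^2_R$ and $\bC^2_S$, which is a symmetric and arguably cleaner fix; your justification that the defect operators' entries commute with the other family (norm limits of polynomials in the commuting $*$-algebras generated by the respective entries) is exactly the point that needs checking, and it is correct. Your treatment of $qa$ by applying the $q$-direction argument to arbitrary approximants of $W$ (not assumed to have the special zero pattern) and using continuity of the corner extraction also matches the paper's density argument.
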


\begin{proof}
Suppose that $t=qc$ and $X \in B_{qc}(n,n)$.  Then there are unitaries $U=(U_{ij}), \, V=(V_{k\ell}) \in M_n(\bofh)$ and a vector $\psi \in \cH$ of norm $1$ such that, for all $i,j,k,\ell$, we have $U_{ij}V_{k\ell}=V_{k\ell}U_{ij}$ and $$X_{(i,j),(k,\ell)}=\la U_{ij}V_{k\ell}\psi,\psi \ra.$$
Let $\widetilde{U}=\begin{pmatrix} 0 & (U_{ij}) \\ (U_{ji}^*) & 0 \end{pmatrix} \in M_{2n}(\bofh)$ and $\widetilde{V}=\begin{pmatrix} 0 & (V_{k\ell}) \\ (V_{\ell k}^*) & 0 \end{pmatrix} \in M_{2n}(\bofh)$, which are unitary.  The entries of $\widetilde{U}$ commute with the entries of $\widetilde{V}$, so with $\widetilde{U}$, $\widetilde{V}$ and $\psi$, we obtain $W' \in B_{qc}(2n,2n)$, where $$W'=\begin{pmatrix} 0 & 0 & 0 & X \\ 0 & 0 & \la U_{ij}V_{\ell k}^*\psi,\psi \ra & 0 \\ 0 & \la U_{ji}^*V_{k\ell} \psi,\psi \ra & 0 & 0 \\ X^* & 0 & 0 & 0 \end{pmatrix}.$$
With $Z=(U_{ij}V_{\ell k}^*\psi,\psi) \in B_{qc}(n,n)$, we see by Proposition \ref{starnorm} that $$W_Z:=W'=\begin{bmatrix} 0 & 0 & 0 & X \\ 0 & 0 & Z & 0 \\ 0 & Z^* & 0 & 0 \\ X^* & 0 & 0 & 0 \end{bmatrix} \in B_{qc}(2n,2n).$$
A similar argument using the unitaries $i (U_{ij})$ and $-i (V_{k\ell})$ shows that $W_{-Z} \in B_{qc}(2n,2n)$.  By convexity, we obtain $W=\frac{1}{2}(W_Z+W_{-Z}) \in B_{qc}(2n,2n)$.  If $t=qa$ and $\ee>0$, then there is $Y \in B_q(n,n)$ such that $|X_{(i,j),(k,\ell)}-Y_{(i,j),(k,\ell)}|<\ee$ for all $i,j,k,\ell$.  The above argument shows that $$R=\begin{bmatrix} 0 & 0 & 0 & Y \\ 0 & 0 & 0 & 0 \\ 0 & 0 & 0 & 0 \\ Y^* & 0 & 0 & 0 \end{bmatrix} \in B_q(2n,2n),$$
and $|R_{(a,b),(c,d)}-W_{(a,b),(c,d)}|<\ee$ for all $1 \leq a,b,c,d \leq 2n$.  Since $B_{qa}(2n,2n)=\overline{B_q(2n,2n)}$, we see that $W \in B_{qa}(2n,2n)$.

Conversely, suppose that $$W=\begin{bmatrix} 0 & 0 & 0 & X \\ 0 & 0 & 0 & 0 \\ 0 & 0 & 0 & 0 \\ X^* & 0 & 0 & 0 \end{bmatrix}$$
is in $B_q(2n,2n)$. Let $\cH_A$ and $\cH_B$ be finite-dimensional Hilbert spaces, $U \in M_{2n}(\cB(\cH_A))$ and $V \in M_{2n}(\cB(\cH_B))$ be unitaries, and $\psi \in \cH_A \otimes \cH_B$ be a unit vector such that for all $1 \leq a,b,c,d \leq 2n$, we have $W_{(a,b),(c,d)}=\la U_{ab}\otimes V_{cd}\psi,\psi \ra$.  Let $$S=(U_{i,(j+n)})_{i,j=1}^n \in M_n(\cB(\cH_A)) \text{ and } T=(V_{k,n+\ell})_{k,\ell=1}^n \in M_n(\cB(\cH_B)).$$
Then $S$ and $T$ are contractions.  Applying the Halmos dilation and performing a canonical shuffle, we obtain unitaries $\widetilde{U} \in M_n(\cB(\cH_A^{(2)}))$ and $\widetilde{V} \in M_n(\cB(\cH_B^{(2)}))$, where $$\widetilde{U}_{ij}=\begin{bmatrix} S_{ij} & (\sqrt{I-S^*S})_{ij} \\ (\sqrt{I-SS^*})_{ij} & -S_{ji}^* \end{bmatrix}$$
and similarly $$\widetilde{V}_{k\ell}=\begin{bmatrix} T_{k\ell} & (\sqrt{I-T^*T})_{k\ell} \\ (\sqrt{I-TT^*})_{k\ell} & -T_{\ell k}^* \end{bmatrix}.$$
Taking $\widetilde{\psi}=\begin{bmatrix} \psi & 0 & 0 & 0 \end{bmatrix}^t \in \cH_A^{(2)} \otimes \cH_B^{(2)}$ gives $$X=(\la (S_{ij} \otimes T_{k\ell})\psi,\psi \ra)_{(i,j),(k,\ell)}=(\la (\widetilde{U}_{ij} \otimes \widetilde{V}_{k\ell})\widetilde{\psi},\widetilde{\psi} \ra)_{(i,j),(k,\ell)} \in B_q(n,n).$$
Since $B_q(m,m)$ is dense in $B_{qa}(m,m)$ for all $m \geq 2$, the converse follows for $t=qa$.

Finally, assume that $W \in B_{qc}(2n,2n)$, and let $U,V \in M_{2n}(\bofh)$ be unitaries and $\psi \in \cH$ be a unit vector such that $W_{(a,b),(c,d)}=\la U_{ab}V_{cd}\psi,\psi\ra$ for all $1 \leq a,b,c,d \leq 2n$.  As before, let $S=(U_{i,(j+n)})_{i,j=1}^n$ and $T=(V_{k,n+\ell})_{k,\ell=1}^n$. We use an argument similar to the proof of \cite[Proposition 4.6]{harris}.  First, we let $$C_{ij}=\begin{bmatrix} S_{ij} & 0 \\ 0 & S_{ij} \end{bmatrix} \in \cB(\cH^{(2)}) \text{ and } D_{k\ell}=\begin{bmatrix} T_{k\ell} & (\sqrt{I-T^*T})_{k\ell} \\ (\sqrt{I-TT^*})_{k\ell} & -T_{\ell k}^* \end{bmatrix} \in \cB(\cH^{(2)}).$$
Since the set $\{S_{ij},S_{ij}^*\}_{i,j=1}^n$ commutes with the set $\{T_{ij},T_{ij}^*\}_{i,j=1}^n$, it follows that, by examining polynomials in $T$ and $T^*$, the set $\{S_{ij},S_{ij}^*\}_{i,j=1}^n$ commutes with each entry of $D_{k\ell}$ and $D_{k\ell}^*$.  Therefore, the set $\{C_{ij},C_{ij}^*\}_{i,j=1}^n$ commutes with the set $\{D_{k\ell},D_{k\ell}^*\}_{k,\ell=1}^n$, while $C=(C_{ij})$ is a contraction and $D=(D_{k\ell})$ is a unitary.  Performing a similar dilation on $C$ and replacing $D_{k\ell}$ with $\begin{bmatrix} D_{k\ell} & 0 \\ 0 & D_{k\ell} \end{bmatrix}$, we obtain unitaries $A=(A_{ij})$ and $B=(B_{k\ell})$ in $M_n(\cB(\cH^{(4)}))$ such that the $(1,1)$-block of $A_{ij}$ is $S_{ij}$ and the $(1,1)$-block of $B_{k\ell}$ is $T_{k\ell}$.  Letting $\widetilde{\psi}=\begin{bmatrix} \psi & 0 & 0 & 0 \end{bmatrix}^t \in \cH^{(4)}$, we see that $$X=(\la A_{ij}B_{k\ell}\widetilde{\psi},\widetilde{\psi} \ra)_{(i,j),(k,\ell)} \in B_{qc}(n,n),$$
which completes the proof.
\end{proof}

The following theorem shows that it is enough to consider self-adjoint elements of $M_n \otimes M_n$ for the embedding problem.

\begin{mythe}
\label{hpconnes}
The following are equivalent.
\begin{enumerate}
\item
Connes' Embedding Problem has a positive answer.
\item
$B_{qa}(n,n)=B_{qc}(n,n)$ for all $n \geq 2$.
\item
$M_n \otimes_{qa} M_n=M_n \otimes_{qc} M_n$ isometrically for all $n \geq 2$.
\item
For every $n \geq 2$ and $X=X^* \in M_n \otimes M_n$ with $\|X\|_{qc}=\|X\|_{M_n \otimes_{\min} M_n}=1$, we have $\|X\|_{qa}=1$.
\item
For every $n \geq 2$ and $M=M^* \in M_n \otimes M_n$ such that $\|M\|_1=\|M\|_{qc}^*=1$, we have $\|M\|_{qa}^*=1$.
\end{enumerate}
\end{mythe}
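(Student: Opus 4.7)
The plan is to prove the five conditions are equivalent by establishing the chain
\[
(1) \Leftrightarrow (2) \Leftrightarrow (3) \Rightarrow (4) \Leftrightarrow (5) \Rightarrow (2).
\]
The equivalence $(1) \Leftrightarrow (2)$ is the main result of \cite{HP}, which identifies Connes' Embedding Problem with the weak Tsirelson problem for unitary correlations, $B_{qa}(n,n)=B_{qc}(n,n)$ for all $n \geq 2$. The equivalence $(2) \Leftrightarrow (3)$ is immediate since $B_t(n,n)$ is defined as the unit ball of $\|\cdot\|_t$, and $(3) \Rightarrow (4)$ is trivial.

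For $(4) \Rightarrow (5)$ I would use a pinching argument. Fix a self-adjoint $M$ with $\|M\|_1=\|M\|_{qc}^*=1$. By compactness of $B_{qc}(n,n)$, there is $Y_0 \in B_{qc}(n,n)$ achieving $\text{Tr}(MY_0)=\|M\|_{qc}^*=1$. The chain
\[
1=|\text{Tr}(MY_0)| \leq \|M\|_1 \|Y_0\|_{\min} \leq \|Y_0\|_{qc} \leq 1,
\]
which uses $\|\cdot\|_{\min}^*=\|\cdot\|_1$ and $\|\cdot\|_{\min} \leq \|\cdot\|_{qc}$, forces $\|Y_0\|_{\min}=\|Y_0\|_{qc}=1$. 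Since $M$ is self-adjoint, replacing $Y_0$ by $(Y_0+Y_0^*)/2$--which remains in $B_{qc}(n,n)$ by Proposition \ref{starnorm} and convexity, is still a maximizer, and is self-adjoint--places us in the setting of $(4)$. Hence $Y_0 \in B_{qa}(n,n)$, giving $\|M\|_{qa}^* \geq 1$; combined with $\|M\|_{qa}^* \leq \|M\|_{qc}^*=1$ from $B_{qa}(n,n) \subseteq B_{qc}(n,n)$, we conclude $(5)$. The reverse $(5) \Rightarrow (4)$ is the dual Hahn-Banach argument.

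The key step is $(5) \Rightarrow (2)$, which leverages Lemma \ref{selfadjoint}. Given $X \in B_{qc}(n,n)$, the lemma produces a self-adjoint $W \in B_{qc}(2n,2n)$ with $W \in B_{qa}(2n,2n) \Leftrightarrow X \in B_{qa}(n,n)$. If $W \notin B_{qa}(2n,2n)$, Hahn-Banach in the real Banach space of self-adjoints yields a self-adjoint $M \in M_{2n} \otimes M_{2n}$ separating $W$ from the self-adjoint part of $B_{qa}(2n,2n)$, so that $\|M\|_{qa}^* < \|M\|_{qc}^*$. The plan is to rescale $M$ and invoke $(5)$ to produce a contradiction.

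The main obstacle is that $(5)$ requires the normalization $\|M\|_1=\|M\|_{qc}^*=1$, which is not automatic for a generic separating functional from Hahn-Banach. The anticipated remedy is to refine the choice of $M$--for instance, by taking $M$ along the polar-decomposition direction of a dual extremizer of $\|W\|_{qc}$, so that the trace norm and the $qc$-dual norm coincide at $M$--so that $(5)$ applies and yields the required contradiction.
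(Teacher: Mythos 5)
Your outline and the easy implications are fine, and your pinching argument for $(4)\Rightarrow(5)$ is correct and is in fact a cleaner route than the paper takes (the paper proves $(5)\Rightarrow(2)$ directly and only remarks that $(4)\Rightarrow(2)$ is ``similar''): the chain $1=|\text{Tr}(MY_0)|\leq\|M\|_1\|Y_0\|_{M_n\otimes_{\min}M_n}\leq\|Y_0\|_{qc}\leq 1$ really does force the symmetrized maximizer $Y_1=\tfrac{1}{2}(Y_0+Y_0^*)$ to satisfy the hypotheses of $(4)$, and then $\|M\|_{qa}^*\geq\text{Tr}(MY_1)=1$ closes that implication.

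The genuine gap is exactly where you flag it, in $(5)\Rightarrow(2)$, and your anticipated remedy does not close it. If you start from an \emph{arbitrary} $X\in B_{qc}(n,n)\setminus B_{qa}(n,n)$, a Hahn--Banach functional $M=M^*$ separating the associated $W$ from the self-adjoint part of $B_{qa}(2n,2n)$, rescaled so that $\|M\|_1=1$, only satisfies $\|M\|_{qc}^*\leq 1$, and there is no reason for equality: the normalization $\|M\|_1=\|M\|_{qc}^*$ asks that the functional attain its full trace-norm supremum already on the subset $B_{qc}(2n,2n)$ of the operator-norm unit ball, and ``taking $M$ along the polar-decomposition direction of a dual extremizer of $\|W\|_{qc}$'' is not a construction --- the trace-norm-attaining directions at $W$ and the functionals separating $W$ from $B_{qa}(2n,2n)$ need not overlap for generic $W$. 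The missing idea is to \emph{not} take a generic $X$: the paper first invokes the proof of $(2)\Rightarrow(1)$ in \cite{HP} to reduce to the case where the putative element of $B_{qc}(n,n)\setminus B_{qa}(n,n)$ is a diagonal correlation $Y$ with entries $\tau(u_iu_j^*)$; such a $Y$ has diagonal entries equal to $1$, hence satisfies $\|Y\|_{M_n\otimes_{\min}M_n}=\|Y\|_{qc}=1$ exactly, and the self-adjoint $W$ of Lemma \ref{selfadjoint} inherits this. One may then arrange a separating functional $g$ with $\|g\|_{M_n\otimes_{\min}M_n}^*=1=|g(W)|$ and $\|g\|_{qa}^*<1$; since $W\in B_{qc}(2n,2n)$, the equality $|g(W)|=1=\|g\|_1$ automatically forces $\|g\|_{qc}^*=1=\|g\|_1$, and taking $\text{Re}(g)$ produces the self-adjoint $M$ contradicting $(5)$. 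Without the reduction to correlations of operator norm exactly one, the hypothesis of $(5)$ is out of reach, so your proof of the one implication that carries the theorem's content is incomplete.
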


\begin{proof}
The equivalence of (1), (2) and (3) is proven in \cite{HP}.  Clearly 3 implies 4 and 5.  We will show that (5) implies (2); the proof that (4) implies (2) is similar.  Let $A \in M_n \otimes M_n$.  By the proof that (2) implies (1), it suffices to know that the following holds for all $n \geq 2$: whenever $Y \in B_{qc}(n,n)$ is diagonal with diagonal entries $\tau(u_iu_j^*)$ for some unitaries $u_1,...,u_n$ in a unital $C^*$-algebra $\cA$ and a tracial state $\tau$ on $\cA$, we have that $Y \in B_{qa}(n,n)$.  In particular, there are entries in such $Y$ equal to $1$.  Therefore, if $\| \cdot \|$ denotes the operator norm on $M_n \otimes M_n$, then $$1 \leq \|Y\| \leq \|Y\|_{qc}=1.$$
Now, assume that (2) fails.  Then there is $Y \in B_{qc}(n,n)$ with operator norm $1$ such that $Y \not\in B_{qa}(n,n)$. By Lemma \ref{selfadjoint}, there is $n \geq 2$ and some $X=X^* \in M_{2n} \otimes M_{2n}$ such that $\|X\|_{qc}=\|X\|=1$ but $\|X\|_{qa}>1$.  By the Hahn-Banach Theorem, we have 
\begin{align*}
1&=\sup \{ |g(X)|: g \in (M_{2n} \otimes M_{2n})^*, \, \|g\|_{M_n \otimes_{\min} M_n}^*=1\} \\
&<\sup \{|f(X)|: f \in (M_{2n} \otimes M_{2n})^*, \, \|f\|_{qa}^*=1\}.
\end{align*}
Therefore, there is $g \in (M_{2n} \otimes M_{2n})^*$ such that $\|g\|_{qa}^*=1$ but $|g(X)|>1$.  A scaling argument shows that we may choose $\ee>0$ and $g \in (M_{2n} \otimes M_{2n})^*$ such that $\|g\|_{M_n \otimes_{\min} M_n}^*=1=|g(X)|$ and $\|g\|_{qa}^*=1-\ee$.  By multiplying $g$ by some $z \in \bT$ if necessary, we may assume that $g(X)=1$.  Let $M \in M_{2n} \otimes M_{2n}$ be the matrix such that $g(Y)=\text{Tr}(YM)$ for all $Y \in M_{2n} \otimes M_{2n}$.  Since $X=X^*$, we see that $$g^*(X)=\text{Tr}(XM^*)=\overline{\text{Tr}(MX)}=\overline{\text{Tr}(XM)}=\overline{g(X)}=1.$$
Since $\| \cdot \|_{qa}^*$ is a $*$-norm, we obtain $\|g^*\|_{qa}^* \leq 1-\ee$ and $\|g^*\|_{M_n \otimes_{\min} M_n}^*=1=g^*(X)$.  Thus, $f=\text{Re}(g):=\frac{g+g^*}{2}$ is a functional with $\|f\|_{qa}^* \leq 1-\ee$ and $f(X)=1=\|f\|_{M_n \otimes_{\min} M_n}^*$.  The associated matrix to $f$ is $\frac{M+M^*}{2}$, which is self-adjoint.  By the contrapositive, (5) implies (2).
\end{proof}

As a corollary, we can describe the embedding problem in terms of perfect strategies for quantum XOR games.

\begin{cor}
\label{winning}
The following are equivalent.
\begin{enumerate}
\item
Connes' embedding problem has a positive answer.
\item
For every $n \geq 2$ and for every quantum XOR game $G$ of size $n$ with associated matrix $M \in M_n \otimes M_n$, we have $\omega_{qa}^*(G)=\omega_{qc}^*(G)$.
\item
Whenever $n \geq 2$ and $G$ is a quantum XOR game of size $n$ with a perfect $qc$-strategy, there is also a perfect $qa$-strategy for $G$.
\end{enumerate}
\end{cor}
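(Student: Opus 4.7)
The plan is to combine Theorem \ref{hpconnes} (which reduces Connes' embedding problem to a statement about self-adjoint dual norms) with Theorem \ref{biasunitaries} (which identifies the bias of an XOR game with a dual norm). A quantum XOR game $G$ of size $n$ is in bijective correspondence with a self-adjoint $M \in M_n \otimes M_n$ satisfying $\|M\|_1 = 1$. By Theorem \ref{biasunitaries}, $\omega_{qc}^*(G) = \|M\|_{qc}^*$, while density of $B_q(n,n)$ in $B_{qa}(n,n)$ together with continuity of $X \mapsto |\Tr(MX)|$ yields $\omega_{qa}^*(G) = \|M\|_{qa}^*$.

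The implication (1) $\Rightarrow$ (2) is immediate from Theorem \ref{hpconnes}(3): if $M_n \otimes_{qa} M_n = M_n \otimes_{qc} M_n$ isometrically for all $n \geq 2$, then the dual norms coincide, so $\omega_{qa}^*(G) = \omega_{qc}^*(G)$ for every quantum XOR game $G$. For (2) $\Rightarrow$ (3), suppose $G$ has a perfect $qc$-strategy; then $\omega_{qc}^*(G) = 1$ and by (2) we get $\omega_{qa}^*(G) = 1$. Since $B_{qa}(n,n)$ is compact and closed under multiplication by scalars $\lambda \in \bT$ (replace a defining unitary $U$ by $\lambda U$), the supremum $\sup\{|\Tr(MY)|:Y \in B_{qa}(n,n)\}$ is attained at some $Y_0$, and a $\bT$-rotation produces $Y \in B_{qa}(n,n)$ with $\Tr(MY) = 1$, i.e., a perfect $qa$-strategy.

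The substance of the argument is in (3) $\Rightarrow$ (1), which I would prove by contraposition using Theorem \ref{hpconnes}(5). Assume Connes' embedding problem fails, so there exist $n \geq 2$ and self-adjoint $M \in M_n \otimes M_n$ with $\|M\|_1 = \|M\|_{qc}^* = 1$ but $\|M\|_{qa}^* < 1$. The quantum XOR game $G$ with matrix $M$ then satisfies $\omega_{qc}^*(G) = 1$, and the same compactness-plus-rotation argument supplies a perfect $qc$-strategy. On the other hand, $\omega_{qa}^*(G) = \|M\|_{qa}^* < 1$ rules out any $Y \in B_{qa}(n,n)$ with $\Tr(MY) = 1$, so there is no perfect $qa$-strategy, contradicting (3).

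The hard part has in fact already been done upstream in Theorem \ref{hpconnes}(5), whose proof (via Lemma \ref{selfadjoint}) reduces the embedding problem to self-adjoint matrices of trace norm one, which are precisely the matrices parametrizing quantum XOR games. Within the corollary itself, the only delicate point is the passage from $|\Tr(MY)| = 1$ to $\Tr(MY) = 1$ via a $\bT$-rotation, needed to upgrade the bias-level equality to a statement about genuine perfect strategies.
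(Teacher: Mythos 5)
Your proposal is correct and follows essentially the same route as the paper: identify $\omega_t^*(G)$ with the dual norm $\|M\|_t^*$ via Theorem \ref{biasunitaries}, deduce (1)$\Rightarrow$(2) from Theorem \ref{hpconnes}, and prove (3)$\Rightarrow$(1) by contraposition using condition (5) of Theorem \ref{hpconnes}. The compactness-plus-$\mathbb{T}$-rotation step you highlight for producing an actual perfect strategy is exactly the mechanism the paper uses (inside the proof of Theorem \ref{biasunitaries}), so there is no substantive difference.
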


\begin{proof}
For $t \in \{qa,qc\}$, the quantity $\omega_t^*(G)$ is the norm of a self-adjoint linear functional on $M_n \otimes_t M_n$.  In particular, if Connes' embedding problem holds, then by Theorem \ref{hpconnes}, $B_{qa}(n,n)=B_{qc}(n,n)$ for all $n$, so that $\omega_{qa}^*(G)=\omega_{qc}^*(G)$ for all quantum XOR games $G$.  Hence, (1) implies (2).  Clearly (2) implies (3), so it remains to show that (3) implies (1).  If (1) fails, then by Theorem \ref{hpconnes}, there is some $n \geq 2$, $M=M^* \in M_n \otimes M_n$, and $0<\ee<1$ such that $$1-\ee=\|M\|_{qa}^*<\|M\|_{qc}^*=\|M\|_{M_n \otimes_{\min} M_n}^*=1.$$
This implies that $\|M\|_1=1$.  Then there is a quantum XOR game $G$ with associated matrix $M$ \cite{RV}.  By the choice of $M$, for every correlation $X=(X_{(i,j),(k,\ell)})_{(i,j),(k,\ell)} \in B_{qa}(n,n)$, we have $$\omega_{qa}^*(G;X) \leq 1-\ee.$$
Hence, there is no perfect $qa$-strategy for $G$.  Meanwhile, there is $Y \in B_{qc}(n,n)$ with $\omega_{qc}^*(G;Y)=1$, so that $Y$ is a perfect $qc$-strategy for $G$.  This completes the proof.
\end{proof}

\end{document}